


\documentclass[final,3p,times]{elsarticle}

\usepackage{booktabs}
 \usepackage{enumitem}
 \usepackage{bm}
\usepackage{lscape}
\usepackage{rotating}
 \usepackage{tikz}
\usetikzlibrary{shapes.geometric, arrows}
\usepackage{listings}
\usepackage{matlab-prettifier}

\usepackage{amsmath}
\usepackage{mathtools}
\mathtoolsset{showonlyrefs=true}

\usepackage{amssymb}
\usepackage{amsthm}
\usepackage{amsmath}
%

%
%
\newtheorem{lemma}{Lemma}%
\newtheorem{algorithm}{Algorithm}%

\journal{Nuclear Physics B}

\begin{document}

\begin{frontmatter}



\title{Method for Verifying Solutions of Sparse Linear Systems with General Coefficients}


\author[inst1]{Takeshi Terao}
\author[inst2]{Katsuhisa Ozaki}

\affiliation[inst1]{organization={Research Institute for Information Technology, Kyushu University},
            addressline={744 Motooka, Nishi-ku}, 
            city={Fukuoka},
            postcode={819-0395}, 
            state={Fukuoka},
            country={Japan}}
\affiliation[inst2]{organization={Department of Mathematical Science, Shibaura Institute of Technology},
            addressline={307 Fukasaku, Minuma-ku}, 
            city={Saitama},
            postcode={337-8570}, 
            state={Saitama},
            country={Japan}}

\begin{abstract}
This paper proposes a verification method for sparse linear systems $Ax=b$ with general and nonsingular coefficients.
A verification method produces the error bound for a given approximate solution.
Conventional methods use one of two approaches.
One approach is to verify the computed solution of the normal equation $A^TAx=A^Tb$ by exploiting symmetric and positive definiteness; however, the condition number of $A^TA$ is the square of that for $A$.
The other approach uses an approximate inverse matrix of the coefficient; however, the approximate inverse may be dense even if $A$ is sparse.
Here, we propose a method for the verification of solutions of sparse linear systems based on $LDL^T$ decomposition.
The proposed method can reduce the fill-in and is applicable to many problems.
Moreover, an efficient iterative refinement method is proposed for obtaining accurate solutions.

\end{abstract}

\begin{keyword}
verified numerical computation \sep sparse linear system \sep minimum singular value \sep accurate computation
\MSC 65G20 \sep 65F50 \sep 65F05
\end{keyword}

\end{frontmatter}


\section{Introduction}
\label{sec:sample1}

In this paper, we consider verified numerical computation for the linear system 
\begin{align}
    Ax=b,\label{eq:lin}
\end{align}
where $A$ is a real $n\times n$ matrix and $b$ is a real $n$-vector.
Solving linear systems is a basic task in scientific computing. 
When a linear system is solved using finite-precision floating-point arithmetic, the solution includes rounding errors.
Therefore, the accuracy of the computed result is unknown.
Verified numerical computations can be used to verify the accuracy of a computed result.
The process involves verifying that there is a real $n\times n$ matrix $Y$ that satisfies $\|YA-I\|_p=\alpha<1$.
If such $Y$ exists, 
\begin{align}
    \|A^{-1}\|_p\leq \frac{\|Y\|_p}{1-\alpha}\text{\quad and\quad}\|A^{-1}b-\widehat x\|_p\leq \|A^{-1}\|_p\cdot\|b-A\widehat x\|_p\leq \frac{\|Y\|_p\cdot\|b-A\widehat x\|_p}{1-\alpha}\label{eq:RA-I}
\end{align}
are satisfied for $L^p$-norms, where $\widehat x$ is an approximation of $x$, and $I$ is the identity matrix with an appropriate size.
Many verification methods based on \eqref{eq:RA-I} have been proposed~\cite{yamamoto1984error,rump1994verification,oishi2002fast,ogita2005largelinsys,ogita2005verified,ozaki2007method,ozaki2011algorithm,morikura2013verification,minamihata2020modified}.
Some of these methods are compared in reviews~\cite{ogita2009fast,rump_2010,rump2013accurate1,rump2013accurate2}.
In most cases, the matrix $Y$ that satisfies~\eqref{eq:RA-I} is dense, even if the given matrix $A$ is sparse.
Therefore, for large-scale systems, verification methods must be implemented on supercomputers due to the high required computation time and memory capacity~\cite{ozaki2021verified}.
To overcome this problem, verification methods for sparse linear systems with particular coefficients have been proposed~\cite{ogita2001fast,rump2007super,minamihata2013fast,minamihata2015improved,oishi2023lower}.
In particular, a verification method with a symmetric positive definite (SPD) coefficient~\cite{rump2007super} can be applied to the normal equation
\begin{align}
    A^TAx=A^Tb.
\end{align}
Although this method is simple and possibly fast, there are problems regarding its stability.
For example, the condition number of $A^T A$ is square of that of $A$, and there are rounding errors in the computation of $A^T A$ and $A^Tb$.
The verification of sparse linear systems with general coefficients is an open problem in {\it Grand Challenges and Scientific Standards in Interval Analysis}~\cite{neumaier2002grand}.
\begin{table}[htbp]\caption{Verification methods for sparse linear systems.}\label{tab:preworks}
    \begin{tabular*}{\textwidth}{@{\extracolsep\fill}lll}
        \toprule
        Author(s) & Year of publication & Note \\
        \midrule
        Rump~\cite{rump1994verification} & 1994 & $LDM^T$ decomposition based method\\
        Ogita et al.~\cite{ogita2006linsys} & 2005 & Memory reduction method\\
        Rump and Ogita~\cite{rump2007super} & 2007 &
        Method for SPD coefficients \\
        Minamihata et al.~\cite{minamihata2019} & 2019 & Fill-in reduction method\\
        \bottomrule
    \end{tabular*}
\end{table}

Some verification methods for sparse linear systems are listed in~Table\ref{tab:preworks}.
The method based on $LDM^T$ decomposition and the memory reduction is impractical for large-scale problems.
Although a method for normal equations can be used to verify the solution of a linear system, 
there is a limit on the condition number as $\kappa_2(A):=\|A\|\cdot\|A^{-1}\|\leq c_n\sqrt{u}^{-1}$, 
where $c_n$ is the positive constant corresponding to $n$, and $u$ is the unit roundoff (e.g., $u=2^{-53}$ for double-precision).
Although the fill-in reduction method runs to completion for $\kappa_2(A)\leq c_nu^{-1}$, the magnitude of fill-in is not sufficiently small and may not be negligible for large-scale matrices.

Here, we propose a method for verifying the solution of \eqref{eq:lin}.
This method involves the estimation of the upper bound of $\|A^{-1}\|$ for nonsingular $A$ and the computation of $\widehat x$ that satisfies a sufficiently small $\|b-A\widehat x\|$ with low computational cost.
The proposed method is applicable to problems with $\kappa_2(A)\leq c_n u^{-1}$.
Thus, even if the verification method for normal equations fails, the proposed method may still run to completion.
Moreover, the proposed method does not require the approximate inverse and the magnitude of fill-in is smaller than that in the fill-in reduction method~\cite{minamihata2019}.

Iterative refinement for linear systems using numerical computation was proposed by Moler~\cite{moler1967iterative}.
Its rounding error analysis has been reported in several studies~\cite{doi:10.1137/1.9781611971446,skeel1980iterative,doi:10.1137/1.9780898718027,doi:10.1137/1.9781611977523}.
A new error analysis and GMRES-based iterative refinement have been proposed~\cite{carson2017new,carson2018accelerating,higham2022mixed}.
Rump and Ogita proposed a rounding error analysis for iterative refinement using the factors of shifted SPD matrices~\cite{rump2007super}.
In the method proposed here, we apply matrix decomposition to shifted matrices obtained from the general coefficients.

The rest of this paper is organized as follows.
Section~\ref{sec:2} presents the notations used in this study and reviews previous work on the verification of sparse linear systems.
Section~\ref{sec:3} describes the proposed method.
Subsection~\ref{subsec:3-1} describes the proposed verification method for the lower bound of the minimum singular value $\underline \sigma_{\min}$ of a general matrix using shifted $LDL^T$ decomposition, and Subsection~\ref{subsec:3-2} describes the proposed iterative refinement method that uses the $LDL^T$ factors obtained when $\underline \sigma_{\min}$ is computed.
Section~\ref{sec:4} presents the computational results to demonstrate the performance and limitations of the proposed methods.
Section \ref{sec:5} gives the conclusions.

\section{Previous work}\label{sec:2}

Let $\mathbb{F}$ be a set of floating-point numbers, as defined according to the IEEE 754-2008 standard~\cite{4610935}.
Let $\mathbb{IF}$ be a set of intervals, as defined according to the INTLAB in MATLAB~\cite{rump1999intlab}.
In this paper, the bold notation $\mathbf{A}$ corresponding to $A$ denotes the interval that satisfies $A\in\mathbf{A}\in\mathbb{IF}^{n\times n}$.
The notations $\mathit{fl}(\cdot), \mathit{fl}_{\bigtriangleup}(\cdot)$, and $\mathit{fl}_{\bigtriangledown}(\cdot)$ denote the numerical results obtained with roundTiesToEven, roundTowardPositive, and roundTowardNegative in IEEE 754-2008, respectively.
Throughout this paper, we assume no pivoting in the matrix decomposition.
For example, $LU$ decomposition for a sparse matrix often uses the minimum degree ordering such that $PAQ=LU$ to reduce the cost due to the fill-in, where $P$ and $Q$ are permutation matrices.
Matrices $P$ and $Q$ for $LU$ and $LDL^T$ decomposition are omitted here to simplify the description of inequalities and algorithms.
For matrices and vectors, $\|\cdot \|$ and $\|\cdot\|_p$ denote the $L^2$-norm and the $L^p$-norm, respectively. 
The inequality $A<B$ for matrices $A$ and $B$ indicates $a_{ij}<b_{ij}$ for all $i,j$ pairs.
This can also be used for vectors.

\subsection{Using Cholesky decomposition}

Let $A\in\mathbb{F}^{n\times n}$ and $0\leq \alpha\in\mathbb{F}$ be given.
On the INTLAB in MATLAB,
the interval $\mathbf{S}\ni A^TA-\alpha I$ can be obtained using interval matrix multiplication and subtraction. 
The function {\tt isspd}~\cite{rump2006verification} in INTLAB can verify the positive definiteness for the given $\mathbf{S}\in\mathbb{IF}^{n\times n}$
(i.e., all symmetric matrices $S\in\mathbf{S}$ is SPD).

If $\forall S$ is positive definite, then $\|A^{-1}\|\leq \sqrt{\alpha}^{-1}$ is satisfied.

\begin{algorithm}[Cholesky decomposition based algorithm]\label{alg:chol}
Let $A\in\mathbb{F}^{n\times n}$ and $\alpha>0$ be given.
This algorithm computes the lower bound of the minimum singular value for a general matrix.
\begin{enumerate}[label=\textbf{Step \arabic*.},itemsep=1ex, leftmargin=1.5cm]
    \item Compute $\mathbf{S}\ni A^TA-\alpha I\in\mathbb{IF}^{n\times n}$ using interval arithmetic.
    \item Run the function {\rm isspd} in the INTLAB for $\mathbf{S}$.
    If {\rm isspd} runs to completion, return the lower bound of the minimum singular value $\sqrt{\alpha}$; 
    otherwise, the verification fails.
\end{enumerate}
\end{algorithm}

\subsection{Using $LU$ decomposition}

Assume that the computed factors of $LU$ decomposition $\widehat L$ and $\widehat U$ are obtained such that $A\approx \widehat L\widehat U$, and denote the approximate inverse matrices of $\widehat L$ and $\widehat U$ as $X_L$ and $X_U$, respectively.
For $H$, which denotes the comparison matrix of $X_LAX_U$, define $H=D-E$ from the diagonal part $D$ and the off-diagonal part $-E$ of $H$.
If there exists $v>\bm{0}$ that satisfies $u=Hv>\bm{0}$, define
\begin{align}
    w_k=\max_{1\leq i\leq n}\frac{G_{ik}}{u_i}\text{\quad for\quad}1\leq k\leq n,
\end{align}
where $G=ED^{-1}\geq O$.
Then, from~\cite{rump2013accurate2}, $A$ is nonsingular and
\begin{align}
    |(X_LAX_U)^{-1}|\leq D^{-1}+vw^T\\
    |A^{-1}b-\widehat x|\leq |X_U|(D^{-1}+vw^T)|X_L(b-A\widehat y)|.
\end{align}
Then, if $\|X_LAX_U-I\|_p<1$, it holds that
\begin{align}
    \|A^{-1}\|_p\leq \frac{\|X_U\|_p\cdot\|X_L\|_p}{1-\|X_LAX_U-I\|_p} \text{\quad and \quad} \|A^{-1}b-\widehat x\|_p\leq \frac{\|X_U\|_p\cdot\|X_L(b-A\widehat x)\|_p}{1-\|X_LAX_U-I\|_p}.\label{eq:minamihata}
\end{align}
This method aims to reduce the fill-in of $X_LAX_U$ compared to that of $X_UX_LA$ from~\cite{minamihata2019}.
Although this method achieves high performance if the magnitude of fill-in is sufficiently small, the fill-in in the computation of $X_L$ and $X_U$ may not be negligible for large-scale matrices. 
For the implementation, the approximate minimum degree ordering algorithm should be used~\cite{amestoy2004algorithm}.

\begin{algorithm}[$LU$ decomposition based algorithm]\label{alg:lu}
Let $A\in\mathbb{IF}^{n\times n}$ be given.
\begin{enumerate}[label=\textbf{Step \arabic*.},itemsep=1ex, leftmargin=1.5cm]
    \item Decompose $A$ into the approximate product $A\approx \widehat L\widehat U$, approximately.
    \item Compute the approximate inverse $X_L$ and $X_U$ of $\widehat L$ and $\widehat U$, respectively.
    \item Compute $\rho\geq \|I-X_LAX_U\|_p$ using interval arithmetic.
    If $\rho<1$, return the upper bound of $\|X_L\|_p\cdot\|X_U\|_p/(1-\rho)$, as the upper bound of $\|A^{-1}\|_p$; otherwise the verification fails.
\end{enumerate}
\end{algorithm}

\subsection{Modified $LU$ based method}

There are cases in which the magnitude of fill-in in $X_LAX_U$ cannot be neglected.
Ogita and Oishi proposed a fast method for dense linear systems~\cite{ogita2005largelinsys}.
Here, we combine the techniques in \cite{minamihata2019} and \cite{ogita2005largelinsys}.
We have
\begin{align}
    X_LAX_U&=I+(X_LA-\widehat U) X_U+(\widehat U X_U-I)\\
    &=I+\Delta,\quad |\Delta|\leq  |X_LA-\widehat U| |X_U|+|\widehat U X_U-I|.\label{eq:reduce}
\end{align}
Under the assumption that $F=X_LA$ and $G=FX_U$,  Algorithm~\ref{alg:lu} requires storing $F$ and $G$ at the same time.
From~\eqref{eq:reduce}, if we obtain the upper bound of $\|F-\widehat U\|_p$, we can clear $X_L$ and $F$.
Therefore, we can reduce the amount of memory required to compute the inclusion of $X_LAX_U$.
Note that the method based on~\eqref{eq:reduce} may have weaker stability than that of the method based on \eqref{eq:minamihata}.

\begin{algorithm}[Modified $LU$ decomposition based algorithm]\label{alg:mlu}
Let $A\in\mathbb{IF}^{n\times n}$ be given.
\begin{enumerate}[label=\textbf{Step \arabic*.},itemsep=1ex, leftmargin=1.5cm]
    \item Decompose $A$ into the approximate product $A\approx \widehat L\widehat U$.
    \item Compute the approximate inverses $X_L$ and $X_U$ of $\widehat L$ and $\widehat U$, respectively.
    \item Compute the upper bound of $\|X_LA-\widehat U\|_p\cdot\|X_U\|_p=\alpha$.
    If $\alpha>1$, switch to Algorithm~\ref{alg:lu} using the results $X_LA$ and $X_U$.
    \item Compute $\|\widehat UX_U-I\|_p\leq\beta$ using interval arithmetic.
    If $\alpha+\beta<1$, return $\|X_L\|_p\cdot\|X_U\|_p/(1-\alpha-\beta)$; otherwise, the verification fails or switch to Algorithm~\ref{alg:lu} using the results $X_LA$ and $X_U$.
\end{enumerate}
\end{algorithm}

\section{Proposed method}\label{sec:3}

For the approximate solution $\widehat x$ of \eqref{eq:lin} with the nonsingular coefficient $A$, it holds
\begin{align}
    \|A^{-1}b-\widehat x\|\leq \|A^{-1}\|\cdot\|b-A\widehat x\|.
\end{align}
Here, the upper bound of $\|b-A\widehat x\|$ can be computed using $\mathcal{O}(n^2)$ floating-point operations.
Thus, if the upper bound of $\|A^{-1}\|$ is obtained, the norm of the residual vector can quickly be bounded. 
In this section, we first propose a verification method for the upper bound of $\|A^{-1}\|$.
Next, we propose an iterative refinement method for computing a sufficiently accurate solution of $\eqref{eq:lin}$.

\subsection{Verification of spectral norm of inverse matrix}\label{subsec:3-1}

Let $A\in\mathbb{R}^{n\times n}$ be given and denote singular value decomposition as
\begin{align}
    A=U\Sigma V^T,\quad U^TU=V^TV=I,\label{eq:svd}
\end{align}
where $U$ and $V$ are orthogonal matrices and $\Sigma$ is a diagonal matrix, where $\Sigma=\mathrm{diag}(\sigma_{1},\dots,\sigma_{n})$.
Note that the singular value decomposition is introduced here to facilitate explanation; it is not required in the proposed method.
For nonsingular~$A$, we have $\|A^{-1}\|=\min_{1\leq i\leq n}(\sigma_i)^{-1}=:\sigma_{\min}^{-1}$.
Thus, we consider the verification of the lower bound of $\sigma_{\min}$.
From~\eqref{eq:svd}, the matrices are defined using block representations as 
\begin{align}
    \bar A=
    \left(
    \begin{array}{cc}
        O & A^T \\
        A & O
    \end{array}
    \right),\quad
    \bar Q=
    \frac{1}{\sqrt{2}}\left(
    \begin{array}{cc}
        V & V \\
        U & -U
    \end{array}
    \right),\quad
    \bar \Sigma=
    \left(
    \begin{array}{cc}
        \Sigma & O \\
        O & -\Sigma
    \end{array}
    \right).\label{eq:ex}
\end{align}
It is known that
\begin{align}
    \bar A=\bar Q\bar \Sigma \bar Q^T,\quad \bar Q^T\bar Q=I.
\end{align}
From this, for $\Lambda(\bar A)$ denoting the spectral of $\bar A$, we have
\begin{align}
    \Lambda(\bar A)=\{-\sigma_1,\dots,-\sigma_n,\sigma_n,\dots,\sigma_1\}.
\end{align}
Therefore, for a real scalar $\theta$, it holds that 
\begin{align}
    \Lambda(\bar A+\theta I)=\{-\sigma_{1}+\theta,\dots,-\sigma_n+\theta,\sigma_n+\theta,\dots,\sigma_1+\theta\}.\label{eql:spectral_sA}
\end{align}
If the number of positive (or negative) eigenvalues is equal to $n$ for $\bar A+\theta I$, it holds that $A$ is nonsingular and $\sigma_{\min}(A)>|\theta|$.
For counting the number of positive eigenvalues, the $LDL^T$ decomposition $\bar A+\theta I=LDL^T$ is often used, where $L\in\mathbb{R}^{n\times n}$ is unit lower triangular and $D\in\mathbb{R}^{n\times n}$ is block diagonal with diagonal blocks of dimension one or two \cite{bunch1977some,ashcraft1998accurate}.
From Sylvester's law of inertia, the inertia of $\bar A+\theta I$ and $D$ are the same.
Based on this, we provide the following theorem.

\begin{lemma}\label{lem:npe}
    Let $\bar A\in\mathbb{R}^{2n\times 2n}, \theta\in\mathbb{R}$, and $\widehat L,\widehat D\in\mathbb{R}^{2n\times 2n}$ be given.
    If the number of positive (or negative) eigenvalues for $\widehat D$ is equal to $n$,
    it holds that
    \begin{align}
        \sigma_{\min}(A)\geq|\theta|-\rho,\label{eq:lem1-1}
    \end{align}
    where $\rho\geq \|\bar A+\theta I-\widehat L\widehat D\widehat L^T\|$.
    In particular, when $|\theta|>\rho$, matrix $A$ is nonsingular and
    \begin{align}
        \|A^{-1}\|\leq \frac{1}{|\theta|-\rho}\label{eq:lem1-2}
    \end{align}
    is satisfied.
\end{lemma}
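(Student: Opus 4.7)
The strategy is to transfer the inertia information of $\widehat{D}$ to $\widehat{L}\widehat{D}\widehat{L}^T$ via Sylvester's law of inertia, and then compare $\widehat{L}\widehat{D}\widehat{L}^T$ with $\bar A + \theta I$ through Weyl's perturbation inequality for symmetric matrices, turning the spectral-norm residual bound $\rho$ into a two-sided bound on each individual eigenvalue.

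First I would record, from \eqref{eql:spectral_sA}, that the eigenvalues of the symmetric matrix $\bar A + \theta I$ listed in nonincreasing order are
\begin{align}
\theta + \sigma_{1} \geq \cdots \geq \theta + \sigma_{n} \geq \theta - \sigma_{n} \geq \cdots \geq \theta - \sigma_{1},
\end{align}
so in particular $\lambda_{n}(\bar A + \theta I) = \theta + \sigma_{\min}(A)$ and $\lambda_{n+1}(\bar A + \theta I) = \theta - \sigma_{\min}(A)$. Since $\widehat{L}$ is unit lower triangular (hence nonsingular) and $\widehat{D}$ is symmetric, the matrix $\widehat{L}\widehat{D}\widehat{L}^T$ is symmetric and, by Sylvester's law, shares the inertia of $\widehat{D}$. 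Consequently, if $\widehat{D}$ has $n$ positive eigenvalues then $\lambda_{n+1}(\widehat{L}\widehat{D}\widehat{L}^T) \leq 0$, while if it has $n$ negative eigenvalues then $\lambda_{n}(\widehat{L}\widehat{D}\widehat{L}^T) \geq 0$.

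Next I would apply Weyl's inequality to the symmetric pair $\bar A + \theta I$ and $\widehat{L}\widehat{D}\widehat{L}^T$, giving $|\lambda_k(\bar A + \theta I) - \lambda_k(\widehat{L}\widehat{D}\widehat{L}^T)| \leq \rho$ for every $k$. Combining this with the inertia information I would split by the sign of $\theta$. If $\theta > 0$ and $\widehat{D}$ has $n$ positive eigenvalues, then
\begin{align}
\theta - \sigma_{\min}(A) = \lambda_{n+1}(\bar A + \theta I) \leq \lambda_{n+1}(\widehat{L}\widehat{D}\widehat{L}^T) + \rho \leq \rho,
\end{align}
yielding $\sigma_{\min}(A) \geq \theta - \rho = |\theta| - \rho$. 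If $\theta < 0$ and $\widehat{D}$ has $n$ negative eigenvalues, then
\begin{align}
\theta + \sigma_{\min}(A) = \lambda_{n}(\bar A + \theta I) \geq \lambda_{n}(\widehat{L}\widehat{D}\widehat{L}^T) - \rho \geq -\rho,
\end{align}
yielding $\sigma_{\min}(A) \geq -\theta - \rho = |\theta| - \rho$. Either way \eqref{eq:lem1-1} follows, and when $|\theta| > \rho$ the lower bound is strictly positive, so $A$ is nonsingular and $\|A^{-1}\| = \sigma_{\min}(A)^{-1} \leq (|\theta| - \rho)^{-1}$, which is \eqref{eq:lem1-2}.

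The only delicate point is verifying that the identifications $\lambda_n(\bar A + \theta I) = \theta + \sigma_{\min}(A)$ and $\lambda_{n+1}(\bar A + \theta I) = \theta - \sigma_{\min}(A)$ hold regardless of the sign of $\theta$; this follows from $\theta + \sigma_i \geq \theta - \sigma_j$ for all $i,j$, since $\sigma_i + \sigma_j \geq 0$. Otherwise the argument is a direct combination of Sylvester's law of inertia with Weyl's perturbation inequality, and I do not anticipate a substantive obstacle.
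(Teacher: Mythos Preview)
Your proof is correct and follows the same route as the paper's: Sylvester's law transfers the inertia of $\widehat D$ to $\widehat L\widehat D\widehat L^T$, and Weyl's inequality then pins $\lambda_n(\bar A+\theta I)=\theta+\sigma_{\min}(A)$ and $\lambda_{n+1}(\bar A+\theta I)=\theta-\sigma_{\min}(A)$ within $\rho$ of a sign change at index $n$. The only cosmetic point is that your pairing of the sign of $\theta$ with a particular inertia hypothesis is unnecessary, since either hypothesis (exactly $n$ positive, or exactly $n$ negative, eigenvalues among the $2n$) already forces both $\lambda_n(\widehat L\widehat D\widehat L^T)\ge 0$ and $\lambda_{n+1}(\widehat L\widehat D\widehat L^T)\le 0$, so the two displayed estimates and hence $\sigma_{\min}(A)\ge |\theta|-\rho$ hold for every sign of $\theta$ without the case split.
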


\begin{proof}
    Suppose that
    \begin{align}
        \bar A+\theta I= \widehat L\widehat D\widehat L^T+\Delta, \quad \rho\geq \|\Delta\|,
    \end{align}
    where matrix $\Delta$ is symmetric.
    Then, we have
    \begin{align}
        \lambda_i(\bar A)+\theta-\rho\leq \lambda_i(\widehat L\widehat D\widehat L^T)\leq \lambda_i(\bar A)+\theta+\rho, \quad \lambda_{i}(\cdot)\in\Lambda(\cdot)
    \end{align}
    for all $i$.
    From the assumption that the number of positive (or negative) eigenvalues is $n$, $\sigma_{\min}(A)\geq|\theta|-\rho$ is satisfied from \eqref{eql:spectral_sA}.
    If $\sigma_{\min}> 0$, $A$ is nonsingular and $\|A^{-1}\|=\sigma_{\min}(A)^{-1}$. 
    Thus, \eqref{eq:lem1-2} holds.
\end{proof}

Based on Lemma~\ref{lem:npe}, our algorithm is as follows.

\begin{algorithm}[Proposed method based on $LDL^T$ decomposition]\label{alg:proposed}\ 

\begin{enumerate}[label=\textbf{Step \arabic*.},itemsep=1ex, leftmargin=1.5cm]
    \item Define $\bar A=\left(\begin{array}{cc}
        O & A^T \\
        A & O
    \end{array}\right)$ and $\theta$ from a given matrix $A$.
    \item Decompose $\bar A+\theta I$ into a product such that $\bar A+\theta I\approx \widehat L\widehat D\widehat L^T$.
    \item Count the number of positive (or negative) eigenvalues of $\widehat D$. 
        If $\mathrm{npe}(\widehat D)\not=\mathrm{nne}(\widehat D)$, restart from Step 2 using smaller $\theta$ or the verification fails.
    \item Compute the upper bound of residual norm $\rho \geq \|A- \widehat L\widehat D\widehat L^T\|$. If $|\theta|>\rho$, return the lower bound of the minimum singular value $|\theta|-\rho$ of $A$; otherwise, restart from Step 2 using larger $|\theta|>\rho$ or the verification fails.
\end{enumerate}
\end{algorithm}

In Step 1, the shift amount $\theta$ can be computed in MATLAB as $\alpha*\mathtt{eigs}(\bar A, 1, 'smallest')$ where $0<\alpha<1$.
In Steps 3-4, ``verification fails" indicates that there is a possibility that $A$ is singular or ill-conditioned.
The main computational cost is $LDL^T$ decomposition and estimation of the upper bound of $\|A-\widehat L\widehat D\widehat L^T\|$.

Rump~\cite{rump_2010} stated that ``{\it There are some methods based on an $LDL^T$ decomposition discussed in~\cite{rump1994verification}; however, the method is only stable with some pivoting, which may, in turn, produce significant fill-in"}. 
Thus, we recommend preconditioning to improve the speed and stability.
However, because the sparsity of the preconditioned matrix is worse than that of the original matrix, preconditioning using incomplete $LU$ or incomplete Cholesky decomposition should not be applied to the proposed method.
One of the most useful preconditioning algorithms is an algorithm for permuting large entries to the diagonal of a sparse matrix~\cite{duff2001algorithms}.
Using this algorithm, we can obtain matrices $P$, $R$, and $C\in\mathbb{R}^{n\times n}$, where $R$ and $C$ are diagonal matrices and $P$ is the permutation matrix. 
Thus, matrix $\widetilde A=RPAC$ permutes and rescales matrix $A$ such that the new matrix $\widetilde A$ has a diagonal with entries of magnitude 1, and its off-diagonal entries have a magnitude not greater than 1.
Through this, we have
\begin{align}
    \sigma_{\min}(A)\geq \frac{\sigma_{\min}(\widetilde A)}{\|R\|\cdot\|C\|}=\frac{\sigma_{\min}(\widetilde A)}{\displaystyle\max_i (|r_{ii}|)\cdot \max_i (|c_{ii}|)}.
\end{align}
This function has been implemented in MATLAB as ``equilibrate".
Here, there is a possibility that $\widetilde A\notin\mathbb{F}^{n\times n}$.
Interval arithmetic using INTLAB is useful such as $$\widetilde{\mathbf{A}}=R*P*\mathtt{intval}(A)*C.$$
In this case, our method computes the $LDL^T$ decomposition of a matrix from $\mathrm{mid}(\widetilde{\mathbf{A}})$ rather than from interval $\widetilde{\mathbf{A}}$ itself. 
Then, the proposed method computes the shifted $LDL^T$ decomposition.
To avoid rounding errors, suppose that 
\begin{align}
    \widetilde r_{ii}=\mathrm{sign}(r_{ii})\cdot 2^{[\log_2|r_{ii}|]},\quad \widetilde c_{ii}=\mathrm{sign}(c_{ii})\cdot 2^{[\log_2|c_{ii}|]},
\end{align}
where $[\alpha ]=\{z \in \mathbb{Z}\ |\ |z-\alpha|\leq 0.5\}$.
Then, it holds that $\widetilde RPA\widetilde C\in\mathbb{F}^{n\times n}$ under the assumption that underflow does not occur.

\subsection{Iterative refinement}
\label{subsec:3-2}
There are several approaches for the iterative refinement of the approximate solution $\widehat x^{(0)}$ of the linear system $Ax=b$.
Assume that $A\approx M\approx M_LM_U$, where $M_L,M_U\in\mathbb{R}^{n\times n}$.
One approach is to compute
\begin{align}
    A^{-1}b-\widehat x\approx M^{-1}(b-A\widehat x^{(k)})\approx \widehat e
\end{align}
and update the solution as $\widehat x^{(k+1)}\leftarrow \widehat x^{(k)}+\widehat e^{(k)}$.
There are several methods based on this approach~\cite{moler1967iterative,ogita2003fast}.
Another approach is to solve the linear system
\begin{align}
    Ae=b-A\widehat x
\end{align}
using Krylov subspace methods with the preconditioner $M$ or $M_LM_U$~\cite{carson2017new,carson2018accelerating}.

Our method computes the $LDL^T$ decomposition of a shifted matrix $\bar A-\theta I$ rather than $\bar A$ itself. 
If the proposed verification runs to completion, it implies that the lower bound on the smallest singular value of $A$ was successfully obtained.
Therefore, the nonsingularity of $A$ is proved.
From Lemma~\ref{lem:npe}, if the lower bound of the minimum singular value of $A$ is obtained as $\delta:=|\theta|-\rho>0$,
it holds that
\begin{align}
    \|A^{-1}b-\widehat x\|\leq\|A^{-1}\|\cdot\|b-A\widehat x\|\leq \frac{\|b-A\widehat x\|}{\delta}=:\epsilon
\end{align}
for the linear system $Ax=b$ and its approximate solution $\widehat x$.
However, the error bound becomes overestimated in many cases.
Therefore, we propose an efficient iterative method for refining the accuracy such that $\widehat x$ satisfies a sufficiently small $\max_{1\leq i\leq n}(\epsilon/\widehat x_i)$.

From the proposed verification method for obtaining $\delta$, we have the approximate factors $\widehat L$ and $\widehat D$ such that $\bar A+\theta I\approx\widehat L\widehat D\widehat L^T$.
For the matrix $\bar A$ and the linear system $Ax=b$, it is shown that
\begin{align}
    \left(
    \begin{array}{cc}
        O & A^T \\
        A & O
    \end{array}
    \right)
    \left(
    \begin{array}{c}
        x \\
        Ax
    \end{array}
    \right)
    =
    \left(
    \begin{array}{c}
        A^Tb \\
        b
    \end{array}
    \right).
\end{align}
Here, when $\theta$ is sufficiently small, we can assume that $\bar A\approx \widehat L\widehat D\widehat L^T$.
Then, for $e\in\mathbb{R}^{n}$, with the error of $\widehat x$ denoted as $x=\widehat x+e$, it holds that 
\begin{align}
    \left(
    \begin{array}{c}
        e \\
        Ae
    \end{array}
    \right)
    &=
    \left(
    \begin{array}{cc}
        O & A^T \\
        A & O
    \end{array}
    \right)^{-1}
    \left(
    \left(
    \begin{array}{c}
        A^Tb \\
        b
    \end{array}
    \right)
    -
    \left(
    \begin{array}{cc}
        O & A^T \\
        A & O
    \end{array}
    \right)
    \left(
    \begin{array}{c}
        \widehat x \\
        A\widehat x
    \end{array}
    \right)
    \right)\\
    &=
    \left(
    \begin{array}{cc}
        O & A^T \\
        A & O
    \end{array}
    \right)^{-1}
    \left(
    \left(
    \begin{array}{c}
        A^Tb \\
        b
    \end{array}
    \right)
    -
    \left(
    \begin{array}{c}
        A^TA\widehat x \\
        A\widehat x
    \end{array}
    \right)
    \right)\\
    &\approx
    (\widehat L \widehat D\widehat L^T)^{-1}
    \left(
    \begin{array}{c}
        A^Tr \\
        r
    \end{array}
    \right),\quad r=b-A\widehat x
    .
\end{align}
Therefore, the approximate correction $\widehat e$ can be obtained from matrix-vector products and triangular systems.
Then, the approximate solution $\widehat x$ can be updated as $\widehat x\leftarrow \widehat x+\widehat e$.
This process is repeated until a sufficiently small residual is obtained.

Based on the above, the proposed algorithm is as follows.
\begin{algorithm}[Iterative refinement]\label{alg:iterref}
Let $A\in\mathbb{F}^{n\times n}$ and $\widehat L,\widehat D\in\mathbb{F}^{2n\times 2n}$ be given.
Let the approximate solution $\widehat x$ of the linear system $Ax=b$ also be given.
\begin{enumerate}[label=\textbf{Step \arabic*.},itemsep=1ex, leftmargin=1.5cm]
    \item Compute $\widehat r\leftarrow b-A\widehat x$ with highly accurate arithmetic and round $\widehat r$ to the working precision.
    \item Define $\bar r=\left(\begin{array}{c}
        A^T\widehat r\\
        \widehat r
    \end{array}\right)$ and compute $\bar e\leftarrow \widehat L^{-T}(\widehat D^{-1}(\widehat L^{-1}\bar r))$.
    \item Update $\widehat x_i\leftarrow \widehat x_i+\bar e_i$ for $1\leq i\leq n$.
    If $\widehat x$ is sufficiently accurate, return the approximate solution $\widehat x$; 
    otherwise, restart from Step 1.
\end{enumerate}

\end{algorithm}

From~Algorithm~\ref{alg:iterref}, define $\widehat x^{(0)}\leftarrow\widehat L^{-T}\widehat D^{-1}\widehat L^{-1}b$ and $\widehat x^{(k+1)}_i\leftarrow \widehat x^{(k)}_i+\widehat e^{(k)}_i$ for $k>0$ and $1\leq i\leq n$.
Then, for $\alpha=\|\theta A^{-1}\|,$ we have
\begin{align}
    \|\widehat x^{(k+1)}-A^{-1}b\|\leq \left(\frac{\alpha}{1-\alpha}\right)^{k+1}\|\widehat x^{(0)}-A^{-1}b\|
\end{align}
from~\cite{rump2007super}.
Therefore, if we set a sufficiently small $\theta$ compared to $\sigma_{\min}(A)$, an accurate solution can be obtained using Algorithm~\ref{alg:iterref} with low computational cost.
The cost of one iteration is at most $\mathcal{O}(n^2)$ floating-point operations.
Because the cost of $LDL^T$ decomposition is about $\mathcal{O}(n^3)$, the cost of the iterative refinement is negligible.
Assume that the computed solution is divided as $\widehat x=y+z$, where $ y, z\in\mathbb{F}^n$ are obtained through the iterative refinement.
Then, 
\begin{align}
    | z|-\delta^{-1}\cdot\|b-A\widehat x\|e\leq|A^{-1}b- y|\leq | z|+\delta^{-1}\cdot\|b-A\widehat x\|e
\end{align}
is satisfied from~\cite{ogita2003fast,ogita2009tight}.

\section{Numerical experiments}\label{sec:4}

We conducted numerical experiments on the proposed method and compared the results with those obtained using existing methods.
The experiments were conducted on a computer with 
an Intel\textregistered Core\texttrademark{} i7-1270P CPU at 2.20 GHz and 32.0 GB of RAM running Windows 11 Pro.
MATLAB 2023a and INTLAB Ver. 12 were used.
The methods were implemented in MATLAB without the MEX (MATLAB Executable, using C or FORTRAN) function.
The code for the proposed method is given in~\ref{sec:MATLABcodes}.

The options \texttt{precond} and \texttt{acc} 
are specified as 
\begin{align}
    \mathtt{verify\_sigmin(A, precond, acc)}.
\end{align}
$\mathtt{precond}=0$ and $1$ indicate no preconditioning and preconditioning, respectively.
$\mathtt{acc}=0$ and $1$ indicate fast interval matrix multiplication and accurate interval matrix multiplication, respectively.
In other words, in the original MATLAB code~\ref{code:sigmin}, we set
$
    \mathtt{intvalinit ( ' SharpIVmult ') ;} \ \mathtt{and}\ \mathtt{intvalinit ( ' FastIVmult ') ;},
$
when \texttt{acc=1} and \texttt{acc=0}, respectively.

We denote the proposed method as P(\texttt{precond}, \texttt{acc}).
Table~\ref{tab:time} shows the elapsed times for the proposed method and approximate $LU$ decomposition obtained in MATLAB with the default values \texttt{[L,U,P,Q]=lu(A)}. 
The amount of shift $\theta$ in Algorithm~\ref{alg:proposed} was computed using eigensolver in MATLAB with the scaling $$\mathtt{theta = 0.5*abs(eigs(G,1,'smallestabs'))}.$$
The results show that the proposed method can verify the lower bound of the minimum singular value of sparse matrices $A$.
Moreover, preconditioning is useful for improving stability and computation time, allowing the proposed method to be applied to many problems.

\begin{table}[htbp]\caption{Comparison of elapsed time for the proposed method and approximate $LU$ decomposition for $A$. The notation nnz$(A)$ denotes the number of nonzero entries of $A$.
The notation ``--" denotes that verification failed.}\label{tab:time}
    \begin{tabular*}{\textwidth}{@{\extracolsep\fill}lrrrrrrr}
        \toprule
         &  & &  \multicolumn{5}{c}{Elapsed time [seconds]}\\\cmidrule{4-8}
        Name & $n$ & nnz$(A)$ & $LU$ & P(0,0) & P(0,1) & P(1,0) & P(1,1)\\\hline
        fd12 & 7,500 & 28,462 & 0.0205 & 0.3524 & 0.3472 & 0.3009 & 0.3040\\
        fd15 & 11,532 & 44,206 & 0.0401 & 0.9446 & 0.9442 & 0.6634 & 0.6753\\
        fd18 & 16,428 & 63,406 & 0.0649 & 2.0265 & 2.0554 & 1.1946 & 1.1924\\\midrule
        cz5108 & 5,108 & 51,412 & 0.0121 & 0.1498 & 0.1508 & 0.1479 & 0.1519\\
        cz10228 & 10,228 & 102,876 & 0.0244 & 0.3669 & 0.3570 & 0.3647 & 0.3699\\
        cz20468 & 20,468 & 206,076 & 0.0481 & 0.8241 & 0.8062 & 0.7338 & 0.7431\\
        cz40948 & 40,948 & 412,148 & 0.0928 & -- & -- & 1.7373 & 1.7403\\\midrule
        epb1 & 14,734 & 95,053 & 0.0423 & 3.3275 & 3.2686 & 3.3212 & 3.3149\\
        epb2 & 25,228 & 175,027 & 0.1459 & 3.5979 & 3.5968 & 3.8957 & 3.9077\\
        epb3 & 84,617 & 463,625 & 0.4137 & 7.5639 & 7.5647 & 7.6185 & 7.6806\\\midrule
        Goodwin\_023 & 6,005 & 182,168 & 0.0364 & 0.6672 & 0.6518 & 0.6954 & 0.7251\\
        Goodwin\_030 & 10,142 & 312,814 & 0.0724 & 1.4230 & 1.4215 & 1.4007 & 1.4008\\
        Goodwin\_040 & 17,922 & 561,677 & 0.1564 & 3.3993 & 3.3707 & 3.0469 & 3.0471\\
        Goodwin\_054 & 32,510 & 1,030,878 & 0.3182 & 13.4612 & 13.4914 & 6.9202 & 6.9572\\
        Goodwin\_071 & 56,021 & 1,797,934 & 0.6574 & 36.1227 & 36.2909 & 15.2437 & 15.2072\\
        Goodwin\_095 & 100,037 & 3,226,066 & 1.3254 & -- & -- & 35.0571 & 35.1644\\
        Goodwin\_127.mat & 178437 & 5778545 & 3.1032 & -- & -- & 81.9969 & 82.1649\\\midrule
        ASIC\_100k & 99,340 & 940,621 & 0.5673 & 23.5867 & 23.9448 & 15.2047 & 15.1801\\
        ASIC\_320k & 321,821 & 1,931,828 & 1.9481 & -- & -- & 122.0162 & 123.3035\\
        ASIC\_680k & 682,862 & 2,638,997 & 0.6922 & -- & --& 251.8666 & 251.6116\\        \bottomrule
    \end{tabular*}
\end{table}

Table~\ref{tab:ratio} shows the ratio of the computation time of the functions in the proposed method P(1,1).
The notations used in the table are as follows:
\begin{description}[labelwidth=7em]
   \item[\texttt{equilibrate}]: time required for preconditioning
   \item[\texttt{eigs}]: time required for computing the amount of shift using eigensolver.
   \item[\texttt{ldl}]: time required for $LDL^T$ decomposition
   \item[Mul]: time required for interval matrix multiplication for computing the upper bound of the residual norm
\end{description}
In many cases, the main computational cost is $LDL^T$ decomposition and matrix multiplications for computing the residual.
However, in some cases (e.g., ASIC\_680k),
preconditioning (\texttt{equilibrate}) is the main computational cost.
Here, if the computational cost of \texttt{equilibrate} is not negligible, the proposed method without preconditioning may fail (cf. Table~\ref{tab:time}).

\begin{table}[htbp]\caption{Ratio of elapsed time of functions in proposed method P(1,1).}\label{tab:ratio}
    \begin{tabular*}{\textwidth}{@{\extracolsep\fill}lrrrrrr}
        \toprule
             & &   \multicolumn{5}{c}{Ratio}\\\cmidrule{3-7}
        Name & condest$(A)$ & \texttt{equilibrate} & \texttt{eigs} & \texttt{ldl} & Mul. & others\\\hline
        fd18 & 1.23e+14 & 6.8 & 14.5 & 29.1 & 46.1 & 3.5\\
        cz40948 & 9.02e+10 & 1.0 & 28.2 & 32.4 & 32.7 & 5.7\\
        epb3 & 7.45e+04 & 1.2 & 14.4 & 46.8 & 35.8 & 1.8\\
        Goodwin\_127 & 8.19e+07 & 14.4 & 9.9 & 25.9 & 48.9 & 0.9\\
        ASIC\_680k & 9.47e+19 & 82.5 & 7.1 & 3.0 & 6.9 & 0.4\\
        \bottomrule
    \end{tabular*}
\end{table}

Next, we compare the performance of the proposed method to that of existing methods.
We set $\alpha =0$ in Algorithm~\ref{alg:chol}.
Note that this is the verification of the positive definiteness of the interval $\bm G\ni A^TA$.
When the given matrix is not ill-conditioned, and the fill-in of $A^TA$ from $A$ is small, Algorithm~\ref{alg:chol} is an efficient method (e.g., epb3).
When the given matrix is not large-scale, Algorithm~\ref{alg:lu} may have high stability because the approximate inverses of $LU$ factors are obtained.
In addition, Algorithm~\ref{alg:lu} may become more efficient if it is implemented on computers with a lot of memory.
In contrast, the proposed method is well-balanced in terms of speed, stability, and required memory.

\begin{table}[htbp]\caption{Elapsed times of previous works Algorithm~\ref{alg:chol} with $\alpha =0$ and Algorithm~\ref{alg:lu}, and proposed method P(1,1). 
\texttt{intval(A)$\backslash$b} is an INTLAB function for computing the enclosure of the solution of a linear system $Ax=b$ with a dense coefficient. 
The notation ``--" denotes that verification failed and ``OOM" denotes out of memory.}\label{tab:compare}
    \begin{tabular*}{\textwidth}{@{\extracolsep\fill}lrrrrr}
        \toprule
             &    \multicolumn{5}{c}{Elapsed time [sec]}\\\cmidrule{2-6}
        Name & \texttt{intval(A)$\backslash$b} & Algorithm~\ref{alg:chol} & Algorithm~\ref{alg:lu} & Algorithm~\ref{alg:mlu} & P(1,1)\\\hline
        fd18 & 185.1658 & -- & 35.2352 & 30.9643 & 0.9159\\
        cz40948 & OOM & -- & 3886.1965 & 996.8880 & 1.7403\\
        epb3 & OOM &  0.6937 & 1618.4312 & 889.2158 & 6.5058\\
        Goodwin\_127 & OOM & -- & OOM & OOM & 65.8549\\
        ASIC\_680k & OOM & OOM & OOM & OOM & 250.5345\\
        \bottomrule
    \end{tabular*}
\end{table}

Finally, we evaluated the performance of iterative refinement.
For highly accurate arithmetic, we used the Multiprecision Computing Toolbox, a MATLAB extension for computing with arbitrary precision~\cite{advanpix2006multiprecision}.
The right-hand side vector was generated as $$\mathtt{b=A*v}$$ with double-precision arithmetic, where $\mathtt{v}=(1,\dots, 1)^T$.
Table~\ref{tab:iterref} shows the elapsed time, number of iterations, and accuracy of the solution.
Let the approximate solution $\widehat x=\widehat x_1+\widehat x_2$ be given.
Then, assume that the interval solution $\mathbf{x}$ is given such that
\begin{align}
    \mathrm{mid}(\mathbf{x})=\widehat x_1\text{\quad and \quad} \mathrm{rad}(\mathbf{x})=\mathit{fl}_{\bigtriangleup}(|\widehat x_2|+\delta^{-1}\cdot\sup\|b-A\widehat x\|\cdot v).
\end{align}
Suppose that the tolerance
\begin{align}
    \frac{\|b-A\widehat x\|}{\|b\|}\leq 2^{-53}\cdot \delta\label{eq:cndeq}
\end{align}
is used in the solution process, where $\delta$ is the lower bound of the minimum singular value computed by the proposed method P(1,1).
From Table~\ref{tab:iterref}, the iterative refinement has a low computational cost compared to that of the verification of the minimum singular value.
If the shift amount in the method P(1,1) decreases, the number of iterations can be reduced.
However, note that the possibility that the verification will fail increases with decreasing shift amount.
In addition, we can obtain the solution with higher accuracy using smaller $\delta$ in~\eqref{eq:cndeq}. 

\begin{table}[htbp]\caption{Elapsed time and number of iterations for Algorithm~\ref{alg:iterref}}\label{tab:iterref}
    \begin{tabular*}{\textwidth}{@{\extracolsep\fill}lrrrr}
        \toprule
         &  \multicolumn{2}{c}{Elapsed time [sec]}\\\cmidrule{2-3}
        Name & P(1,1) & Iter. Ref. & No. of Iter. & $\max|\mathrm{rad}(\mathbf{x}_i)/\mathrm{mid}(\mathbf{x}_i)|$\\\hline
        fd18 & 1.2159 & 0.6146 & 35 & 1.4776e-11 \\
        cz40948 & 1.7403 & 1.0305 & 41 & 7.7716e-16 \\
        epb3 & 6.5058 & 2.7515 & 36 & 1.0991e-14\\
        Goodwin\_127 & 65.8549 & 9.2349 & 25 & 1.4732e-12\\
        ASIC\_680k & 250.5345 & 12.5646 & 31 & 1.0198e-08\\
        \bottomrule
    \end{tabular*}
\end{table}

If the computational cost of $LU$ decomposition is low, then 
the time required for iterative refinement can be shortened.
Algorithm~\ref{alg:iterref} uses the $LDL^T$ factors of the shifted matrix.
Many iterations are thus required for the refinement.
However, if we can use the $LU$ factors of the original matrix, the size of the matrix will be halved, decreasing the number of iterations.
Table~\ref{tab:iterref_LU} shows the results of iterative refinement using $LU$ decomposition.
The elapsed time of the iterative refinement includes that of $LU$ decomposition.


\begin{table}[htbp]\caption{Elapsed time and number of iterations using $LU$ decomposition.}\label{tab:iterref_LU}
    \begin{tabular*}{\textwidth}{@{\extracolsep\fill}lrrrr}
        \toprule
         &  \multicolumn{2}{c}{Elapsed time [sec]}\\\cmidrule{2-3}
        Name & P(1,1) & Iter. Ref. & No. of Iter. & $\max|\mathrm{rad}(\mathbf{x}_i)/\mathrm{mid}(\mathbf{x}_i)|$\\\hline
        fd18 & 1.2159 & 0.3669 & 3 & 5.5511e-16 \\
        cz40948 & 1.7403 & 0.6002 & 3 & 1.1102e-16 \\
        epb3 & 6.5058 & 0.8802 & 3 & 1.1102e-16\\
        Goodwin\_127 & 65.8549 & 5.2147 & 3 & 1.1102e-16\\
        ASIC\_680k & 250.5345 & 19.5646 & 4 & 1.1102e-16\\
        \bottomrule
    \end{tabular*}
\end{table}

\section{Conclusion} \label{sec:5}

In this paper, we proposed a method for verifying the accuracy of a solution of sparse linear systems with general coefficients.
The proposed method has high stability and low memory requirements, as demonstrated by numerical experiments.
We demonstrated that the proposed method can obtain an accurate and verified solution of a linear system using iterative refinement.
Then, the proposed method contributes to one of the open problems in verified numerical computations.

\appendix

\section{MATLAB codes}\label{sec:MATLABcodes}\label{sec:appA}

This appendix gives the source code for Algorithms~\ref{alg:chol}, \ref{alg:lu}, and \ref{alg:proposed}.
The INTLAB code~\ref{code:rump} verifies the positive definiteness of $A^TA-\alpha I$.
Note that to obtain the verified solution of a linear system, the source code should be edited to store the Cholesky factor and preconditioner.
\lstinputlisting[style=Matlab-editor,label={code:rump},caption={Verification of lower bound of the minimum singular value using Cholesky decomposition.}]{rump.m}

MATLAB code~\ref{code:minamihata} computes the approximate inverse $LU$ factors $X_L$ and $X_U$ and the enclosure $\mathbf{G}\ni X_LAX_U$.
If $\|\mathbf{G}\|_{\infty}\leq \gamma <1$, the function outputs $\rho<\|X_L\|_{\infty}\|X_U\|_{\infty}/(1-\gamma),$
which is the upper bound of $\|A^{-1}\|_{\infty}$.

\lstinputlisting[style=Matlab-editor,label={code:minamihata},caption={Verification of $\|X_LAX_U-I\|\leq \rho<1$.}]{minamihata.m}

Finally, we give the source code for the proposed method.
The private function \texttt{NormBnd} for calculating the upper limit of the spectral norm is based on~\cite{rump2023fast}.
The private function \texttt{npe\_bdiag} counts the number of positive eigenvalues of the block diagonal matrix.
Note that this function outputs the lower bound of the minimum singular value of $\widetilde A=RPAC$ when the option $\mathtt{precond}=1$ is used.

\lstinputlisting[style=Matlab-editor,label={code:sigmin},caption={Verification of lower bound of the minimum singular value.}]{verify_sigmin.m}

\section*{Acknowledgements}

The author would like to thank Dr. Atsushi Minamihata from Kansai University of International Studies, Japan, for providing practical data~\cite{minamihata2019} and helpful comments. 
This work was partially supported by JSPS KAKENHI Grant Number 23H03410.
The authors thank FORTE Science Communications for English language editing.

 \bibliographystyle{elsarticle-num} 
 \bibliography{cas-refs}





\end{document}